\title{Unstable singular del Pezzo hypersurfaces with lower index}
\author[I.-K. Kim]{In-Kyun Kim}
\address{I.-K. Kim : Mathematics, Sungkyunkwan University, 2066 Seobu-ro, Suwon 16419, Korea}
\email{soulcraw@gmail.com}
\author[J. Won]{Joonyeong Won}
\address{J. Won : Center for Mathematical Challenges, Korea Institute for Advanced Study, Seoul 02455, Republic of Korea}
\email{leonwon@kias.re.kr}
\newtheorem{theorem}{Theorem}[section]
\newtheorem{definition}[theorem]{Definition}
\newtheorem{corollary}[theorem]{Corollary}
\newtheorem{lemma}[theorem]{Lemma}
\newtheorem{conj}[theorem]{Conjecture}
\theoremstyle{remark}
\newcommand{\lct}{\operatorname{lct}}
\newcommand{\wt}{\operatorname{wt}}
\newcommand{\vol}{\operatorname{vol}}
\newcommand{\dd}{\mathop{}\!\mathrm{d}}
\newcommand{\CC}{{\mathbb C}}
\newcommand{\NN}{{\mathbb N}}
\newcommand{\PP}{{\mathbb P}}
\newcommand{\QQ}{{\mathbb Q}}
\newcommand{\RR}{{\mathbb R}}
\newcommand{\ZZ}{{\mathbb Z}}
\newcommand{\msp}{\mathsf p}
\newcommand{\mcO}{\mathcal O}
\begin{document}
\maketitle
\begin{abstract}
    We give examples of K-unstable singular del Pezzo surfaces which are weighted hypersurfaces with index 2.
\end{abstract}

Throughout the article, the ground field is assumed to be the field of complex numbers.

\section{Introduction}
Singular del Pezzo surfaces with orbifold K\"ahler-Einstein metrics attracted the attention from Sasakian geometry since a link of singularity attach it to real 5-manifolds with Sasaki-Einstein metrics (\cite{BG08}, \cite{BGN03}, \cite{PW19} ).

Let $S_d$ be a quasi-smooth and well-formed hypersurface in $\mathbb{P}(a_0, a_1, a_2, a_3 )$ of degree $d$, where $a_0\leq a_1 \leq a_2 \leq a_3 $ are positive integers. Put an index $I=a_0+a_1+a_2+a_3-d$ and assume $I$ is positive. Then $S_d$ is a log del Pezzo surface with at most quotient singularities. For $I=1$ Johnson and Koll\'ar \cite{JK01} found all possibilities for quintuple $(a_0,a_1,a_2,a_3,d)$ and then computed the alpha invariant to show the existence of the orbifold K\"ahler-Einstein metric in the case when the quintuple $(a_0,a_1,a_2,a_3,d)$ is not of the following four quintuples : $ (1, 2, 3, 5, 10),(1, 3, 5, 7, 15),(1, 3, 5, 8, 16),(2, 3, 5, 9, 18)$. Later, Araujo \cite{A02} shows for the two of these four cases. 
The remaining two cases have been dealt with in the paper \cite{CPS10} who  shows the alpha invariant is large enough to admit orbifold K\"ahler-Einstein metrics except for possibly the case when $(a_0,a_1,a_2,a_3,d)=(1,3,5,7,15) $ and the defining equation of the surface $S_d$ does not contain a monomial $yzt$. Finally, the paper \cite{CPS18} proves K-stability of remaining one case  by estimating the delta invariant so that it has orbifold K\"ahler-Einstein metrics. This ends the existence of the orbifold K\"ahler-Einstein metric on the log del Pezzo hypersurfaces with index 1. 

Moreover the paper \cite{CPS18} expect that the similar phenomenon happens in  surfaces with lower index.

\begin{conj}\label{conj1} \cite{CPS18}  
If $I=2$ then $S_d$ admits an orbifold K\"ahler-Einstein metric.
 \end{conj}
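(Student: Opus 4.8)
The plan is to prove the conjecture through the Yau--Tian--Donaldson correspondence for orbifold del Pezzo surfaces. Since $S_d$ is, by hypothesis, a well-formed quasi-smooth hypersurface with only cyclic quotient singularities and positive index, it is a log Fano surface; for such orbifolds the existence of an orbifold K\"ahler--Einstein metric is equivalent to K-polystability. Thus it suffices to verify K-polystability for every admissible $S_d$ with $I=2$. The first step is to make the problem finite: exactly as in Johnson--Koll\'ar \cite{JK01}, the combination of quasi-smoothness, well-formedness, and the constraint $I=a_0+a_1+a_2+a_3-d=2$ bounds the weights $a_0\le a_1\le a_2\le a_3$ and leaves only a finite, explicitly listable set of quintuples $(a_0,a_1,a_2,a_3,d)$. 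The conjecture then becomes a finite case check.

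For the bulk of the families I would argue via the alpha invariant $\alpha(S_d)=\glct(S_d)$ and Tian's criterion: since $\dim S_d=2$, any member with $\glct(S_d)>\tfrac{2}{3}$ automatically carries an orbifold K\"ahler--Einstein metric. The lower bound on $\glct$ is produced by the method of \cite{JK01} and \cite{CPS10}. One writes an arbitrary effective $\QQ$-divisor $D\sim_{\QQ}-K_{S_d}$, decomposes its support relative to the coordinate curves $C_i=\{x_i=0\}\cap S_d$, and estimates $\lct(S_d,D)$ separately on the smooth locus (via weighted degree and Bertini-type bounds) and at each orbifold point of $\Sing(S_d)$, where quasi-smoothness pins down the local analytic type and hence the local $\lct$. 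For most quintuples this should yield $\glct(S_d)>\tfrac{2}{3}$ and close the case.

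The remaining, and genuinely hard, cases are the finitely many ``boundary'' quintuples where the alpha bound degrades to $\glct(S_d)\le\tfrac{2}{3}$, together with their special members whose defining polynomial omits a distinguished monomial --- the $I=2$ analogue of the $yzt$-free surfaces in the $(1,3,5,7,15)$ family that resisted the alpha method in the index-one problem. Here I would replace the alpha invariant by the stability threshold $\delta(S_d)$ and aim to prove $\delta(S_d)>1$ by an Abban--Zhuang inversion-of-adjunction argument: restrict to a well-chosen coordinate curve, compute the expected vanishing order $\ord$ and the associated $S$-invariant from the volumes $\vol$ of the restricted multigraded linear series, and compare it with the log discrepancy $A_{S_d}$ of each candidate destabilising valuation. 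The main obstacle concentrates at the worst quotient singularity: the valuation centred there tends to produce the smallest $\delta$, so one must track the refined filtration carefully and, where the torus or finite automorphisms act on $S_d$, upgrade to equivariant K-stability to enlarge the effective threshold. Whether $\delta(S_d)>1$ holds uniformly across every such boundary member is exactly the crux on which the conjecture turns, and that is precisely where the delicate volume computations must be pushed through.
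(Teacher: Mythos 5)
You have the direction of the paper backwards: the statement you are trying to prove is a conjecture quoted from \cite{CPS18} precisely so that it can be \emph{refuted}, and the paper's main theorem disproves it by exhibiting quasi-smooth hypersurfaces with $I=2$ that admit no orbifold K\"ahler--Einstein metric, namely the quintuples $(1,6,9,13,27)$, $(1,9,15,22,45)$, $(1,3,3n+3,3n+4,6n+9)$, $(1,1,n+1,m+1,n+m+2)$ with $n<m$, and $(1,3,3n+4,3n+5,6n+11)$. Concretely, for the first three families one takes the hyperplane section $H_x\equiv\frac12(-K_S)$ and computes its log canonical threshold on the highly degenerate curve $\{x=0\}$ (e.g.\ $z^3+zy^3=0$ in $\PP(6,9,13)$ gives $\lct(S_{27},H_x)=\frac{5}{9}$, and $\lct(S_{6n+9},H_x)=\frac{n+2}{2n+3}$), which forces $\alpha(S)<\frac13$; the Blum--Jonsson inequality $\delta(S)\le 3\,\alpha(S)$ then yields $\delta(S)<1$, so $S$ is not K-semistable and carries no orbifold KE metric. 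For the remaining families the paper produces an explicit destabilising valuation: the exceptional divisor $E$ of a suitable weighted blow-up at the singular point $\msp_t$ has $\beta(E)<0$ (for instance $\beta(E)=-\frac{1}{18}$ for $S_9\subset\PP(1,3,3,4)$, and $\beta(E)=\frac{8}{3(m+1)^2}-\frac{8}{3(n+1)^2}<0$ for the $(1,1,n+1,m+1)$ family), violating Fujita's valuative criterion for K-semistability.

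Beyond the global misdirection, two concrete steps of your plan fail. First, your finiteness reduction is false: the constraint $I=2$ does \emph{not} leave a finite list of quintuples --- three of the paper's counterexample families are infinite series in $n$ (and $m$), so ``the conjecture becomes a finite case check'' collapses at the outset. Second, the crux you correctly isolate --- whether $\delta(S_d)>1$ can be pushed through on the boundary members where the alpha bound degrades --- has a negative answer: on the families above, exactly the candidate valuations you propose to test (divisors extracted over the worst quotient singularity, and the coordinate curves inside $H_x$) realise $\delta(S_d)<1$ rather than bounding it below, and once a single prime divisor with $\beta(E)<0$ exists, no equivariant refinement or Abban--Zhuang restriction can restore K-polystability. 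Your toolkit (orbifold Yau--Tian--Donaldson, Tian's $\alpha>\frac23$ criterion, $\delta$-invariant estimates) is the same one the paper uses, but it must be run in the opposite direction, to certify instability; the only $I=2$ family left open in this spirit is $(1,1,n+1,n+1,2n+2)$, where the paper conjectures $\delta=1$.
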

 The present article is motivated by Conjecture \ref{conj1} and answer it by following theorem. 
 
\begin{theorem}
    Suppose that $S_d$ is quasi-smooth and the quintuple $(a_0,a_1,a_2,a_3,d)$ is one of the following quintuples:
    \begin{equation*}
        (1,6,9,13,27),\quad (1,9,15,22), \quad (1,3,3n+3,3n+4,6,+9),
    \end{equation*}
    \begin{equation*}
        (1,1,n+1,m+1,n+m+2),\quad(1,3,3n+4,3n+5,6n+11)
    \end{equation*}
    where $n$ and $m$ are non-negative integers with $n<m$. Then $S_d$ does not have an orbifold K\"ahler-Einstein metric.
\end{theorem}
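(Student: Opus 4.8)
The plan is to prove \emph{K-instability} of each surface $S_d$ by exhibiting a single divisorial valuation that violates the Fujita--Li valuative criterion, and then to invoke the necessity direction of the Yau--Tian--Donaldson correspondence for log Fano varieties (Berman): a del Pezzo surface with quotient singularities carries an orbifold K\"ahler--Einstein metric only if it is K-polystable, hence K-semistable, so producing a prime divisor $E$ over $S_d$ with
\[
\beta_{S_d}(E)\;=\;A_{S_d}(E)\,(-K_{S_d})^2\;-\;\int_0^{\infty}\vol\!\bigl(-K_{S_d}-tE\bigr)\,\dd t\;<\;0
\]
immediately rules out such a metric. Here $-K_{S_d}=\mathcal O_{S_d}(I)=\mathcal O_{S_d}(2)$ by adjunction, so $(-K_{S_d})^2=4d/(a_0a_1a_2a_3)$ is read off directly from the quintuple.

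For the destabilizing valuation I would take, in each family, the exceptional divisor $E$ of the Kawamata (weighted) blow-up $\pi\colon\widetilde S\to S_d$ at the coordinate vertex of largest weight lying on $S_d$ --- for instance $P_z=[0{:}0{:}0{:}1]$ in the family $(1,1,n+1,m+1,n+m+2)$, which sits on $S_d$ precisely because $(m+1)\nmid d$ when $n<m$. Quasi-smoothness lets one eliminate the variable whose monomial generates the equation near $P$ (here $yz$), exhibiting the local type $\tfrac1r(1,a)$ of the cyclic quotient singularity; in the example this is $\tfrac1{m+1}(1,1)$. The log discrepancy is then $A_{S_d}(E)=\tfrac{1+a}{r}$ (here $\tfrac{2}{m+1}$), a purely arithmetic quantity.

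The substantive step is the evaluation of
\[
S_{S_d}(E)=\frac{1}{(-K_{S_d})^2}\int_0^{\tau}\vol\!\bigl(\pi^*(-K_{S_d})-tE\bigr)\,\dd t ,
\]
for which I would compute the Zariski decomposition of $\pi^*(-K_{S_d})-tE$ on $\widetilde S$ as $t$ ranges over $[0,\tau]$, where $\tau$ is the pseudo-effective threshold. The negative part is assembled from the proper transforms of the few coordinate curves $\{x_i=0\}\cap S_d$ of small weight passing through $P$, which enter at successive walls; at each wall one solves a small linear system forcing the mobile part to be orthogonal to the contracted curves, using the fractional self-intersection numbers induced by the remaining quotient singularities of $\widetilde S$. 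Integrating the resulting piecewise-polynomial volume and subtracting from $A_{S_d}(E)$ presents $\beta_{S_d}(E)$ as an explicit rational function of the parameters $n,m$, which I would show is negative for every admissible integer value.

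I expect the Zariski-decomposition bookkeeping to be the main obstacle: one must correctly identify which coordinate curves occur in the negative part and in what order the walls appear, and then carry the fractional intersection corrections coming from the several cyclic quotient singularities of $\widetilde S$ through the computation without error. A secondary but essential point is to confirm, from well-formedness and the list of degree-$d$ monomials, exactly which vertices lie on $S_d$ and which coordinate curves are irreducible, so that the chosen center $P$ and the curves entering the Zariski decomposition are the correct ones \emph{uniformly} across each infinite family.
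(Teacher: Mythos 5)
Your overall framework --- exhibit one prime divisor with negative $\beta$ and invoke the necessity direction of the Yau--Tian--Donaldson correspondence --- is sound, and it is exactly what the paper does for $(1,3,3,4,9)$, $(1,1,n+1,m+1,n+m+2)$ and $(1,3,3n+4,3n+5,6n+11)$; your sample computation for $(1,1,n+1,m+1,n+m+2)$ (the point $\msp_t=[0{:}0{:}0{:}1]$ of type $\frac{1}{m+1}(1,1)$, weights $(1,1)$) coincides with the paper's. The genuine gap is your uniform prescription of the destabilizing valuation: the Kawamata blow-up, i.e.\ weights $(1,a)$ after normalizing the singularity to $\frac{1}{r}(1,a)$, provably fails for the family $(1,6,9,13,27)$. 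There $\msp_t$ has type $\frac{1}{13}(6,9)\cong\frac{1}{13}(1,8)$ in local coordinates $(y,z)$, and $-K_S\equiv 2H_x$ with $H_x=L+C$, where locally $L=\{z=0\}$, $C=\{z^2+y^3=0\}$, so that $L\cdot C=\frac{3}{13}$, $L^2=-\frac{17}{78}$, $C^2=-\frac{8}{39}$, $(-K_S)^2=\frac{2}{13}$. For the Kawamata weights $(1,8)$ one gets $A_S(E)=\frac{9}{13}$, $E^2=-\frac{13}{8}$, $\ord_E(L)=\frac{8}{13}$, $\ord_E(C)=\frac{3}{13}$, hence $\bar L^2=-\frac{5}{6}$, $\bar C^2=-\frac{7}{24}$, $\bar L\cdot\bar C=0$; carrying out the Zariski decomposition of $\pi^*(-K_S)-\lambda E$ (walls at $\lambda=\frac{1}{39}$ and $\lambda=\frac{16}{117}$, pseudo-effective threshold $\tau=\frac{22}{13}$) yields
\begin{equation*}
\int_0^{\tau}\vol\bigl(\pi^*(-K_S)-\lambda E\bigr)\dd\lambda=\frac{1258}{13689},
\qquad
A_S(E)(-K_S)^2=\frac{9}{13}\cdot\frac{2}{13}=\frac{1458}{13689},
\end{equation*}
so $\beta(E)=\frac{200}{13689}>0$: this divisor does \emph{not} destabilize $S_{27}$. (The other normalization $\frac{1}{13}(1,5)$, i.e.\ weights $(5,1)$, gives $\beta(E)=\frac{62}{1521}>0$ as well.) What does work over $\msp_t$ is the weighted blow-up with weights proportional to $(2,3)$ on $(y,z)$ --- the valuation computing $\lct$ of the cusp $z(z^2+y^3)$ --- for which $A_S(E)=\frac{5}{13}$ while $\ord_E(-K_S)=\frac{18}{13}$, so $A_S(E)<\frac{1}{3}T(E)\le S(E)$ and $\beta(E)<0$. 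The moral is that the weights must be adapted to the defining equation (the tangency of $L$ and $C$, resp.\ the cusp of $H_x$), not read off from the singularity type alone; the same cuspidal configurations occur in $(1,9,15,22,45)$ and $(1,3,3n+3,3n+4,6n+9)$, so your plan as stated breaks down on three of the five families.

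You should also note that the paper never runs the $\beta$-computation for those three families: it uses the cheaper estimate $\alpha(S)\le\frac{1}{2}\lct(S,H_x)$ with $\lct(S_{27},H_x)=\frac{5}{9}$, $\lct(S_{45},H_x)=\frac{8}{15}$, $\lct(S_{6n+9},H_x)=\frac{n+2}{2n+3}$ (the last being $<\frac{2}{3}$ only for $n\ge 1$, which is why $(1,3,3,4,9)$ is treated separately), giving $\alpha(S)<\frac{1}{3}$ and hence $\delta(S)\le 3\alpha(S)<1$ by Blum--Jonsson; this sidesteps all Zariski bookkeeping, i.e.\ precisely the step you flagged as the main obstacle. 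Moreover, even where the paper does compute $\beta$, its blow-up weights are the action weights --- $(2,n+1)$ at the $\frac{1}{3n+5}(2,n+1)$ point of $S_{6n+11}$ --- which for $n\ge 1$ is not a $(1,a)$-blow-up; your prescription agrees with the paper's divisor only at points of type $\frac{1}{r}(1,1)$. Two minor slips: the vertex $[0{:}0{:}0{:}1]$ is $\msp_t$ (not $P_z$), and the monomial used to eliminate a variable there in that family is $tz$, not $yz$.
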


 \begin{conj}
A singular del Pezzo surface $S_d$ with $I=2$ and the quintuple $(a_0,a_1,a_2,a_3,d)$ is not one of the above 5 cases and is not $(1,1,n+1, n+1, 2n+2)$. Then it is $K$-stable so that it admits an orbifold K\"ahler-Einstein metric. 
 \end{conj}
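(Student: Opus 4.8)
The plan is to establish K-stability by proving that the stability threshold (delta invariant) satisfies $\delta(S_d)>1$; by the valuative criterion of Fujita and Li this is equivalent to K-stability, and for a log del Pezzo surface with at most quotient singularities it further guarantees the existence of an orbifold K\"ahler--Einstein metric. First I would record the basic numerical data: since $I=2$ we have $-K_{S_d}=\mcO_{S_d}(2)$, with anticanonical degree $(-K_{S_d})^2 = 4d/(a_0a_1a_2a_3)$, and $\Sing(S_d)$ consists of the cyclic quotient singularities forced at the coordinate vertices (and edges) by quasi-smoothness and well-formedness. These singular points are the only places where the local analysis is delicate, so the bulk of the work concentrates there.

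The central tool is the local-to-global identity $\delta(S_d)=\inf_{p\in S_d}\delta_p(S_d)$ combined with the Abban--Zhuang refinement method. For each point $p$ I would choose a flag $p\in C\subset S_d$ with $C$ a low-degree curve adapted to the weighted structure -- typically a coordinate section $S_d\cap\{x_i=0\}$ through $p$, or the curve cut out by two vanishing coordinates. Abban--Zhuang then bounds $\delta_p$ from below by
\[
\delta_p(S_d)\ \ge\ \min\left\{\frac{A_{S_d}(C)}{S_{S_d}(C)},\ \inf_{p\in C}\frac{A_{C,\Delta_C}(p)}{S(W^C_{\bullet,\bullet};p)}\right\},
\]
where $\Delta_C$ is the different produced by inversion of adjunction and $S(W^C_{\bullet,\bullet};p)$ is the refined expected vanishing order computed from the two-step filtration of the restricted systems. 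The quantities $S_{S_d}(C)$ and $S(W^C_{\bullet,\bullet};p)$ are integrals of $\vol(-K_{S_d}-tC)$ and of intersection numbers of the positive part, which on a surface are piecewise polynomial; evaluating them amounts to locating the Zariski-decomposition walls of $-K_{S_d}-tC$, and this is precisely where the explicit weights $(a_0,a_1,a_2,a_3)$ enter.

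I would then organize the verification by the type of point $p$: smooth points away from the vertices, the cyclic quotient singularities $\tfrac{1}{r}(a,b)$ at the vertices, and points on the distinguished coordinate curves. For the two sporadic quintuples the computation is finite; for the families indexed by $n$ (and $m$) the estimates must be carried out uniformly in the parameters, so I would express $A_{S_d}(C)$, $S_{S_d}(C)$, $\Delta_C$ and the refined local term as explicit rational functions of $n,m$ and check that the minimum above strictly exceeds $1$ for every admissible value. The excluded quintuples -- the five destabilized families of the Theorem together with the diagonal case $(1,1,n+1,n+1,2n+2)$ -- should surface exactly as the loci where one of these ratios drops to $\le 1$, the diagonal case being strictly K-semistable (hence K-polystable with a K\"ahler--Einstein metric but not K-stable, on account of its extra $\CC^*$-symmetry). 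This coincidence is the consistency check that the lower bound is sharp.

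The main obstacle will be the uniform estimate at the quotient singularities across the infinite families: there $A_{S_d}(C)$ and the different $\Delta_C$ depend on the local singularity type $\tfrac{1}{r}(a,b)$, whose parameters grow with $n$ and $m$, so producing a single closed-form lower bound that stays strictly above $1$ -- and collapses to $1$ precisely in the excluded cases -- requires controlling the Zariski decomposition of $-K_{S_d}-tC$ and the resulting volume integrals simultaneously for all $n,m$. A secondary difficulty is the choice, at each singular vertex, of a flag curve that is simultaneously of small enough degree to keep $S_{S_d}(C)$ low and compatible with the quotient structure so that the adjunction computation stays tractable: an inopportune choice yields a bound $\le 1$ even for stable members, so the selection of flags is itself an essential part of the argument.
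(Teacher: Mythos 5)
This statement is one of the paper's \emph{conjectures}: the paper offers no proof of it, and in fact the whole point of the article is the opposite direction (exhibiting the destabilized families). So there is no paper proof to compare against --- and, more importantly, what you have written is not a proof either. It is a research plan. You correctly name the modern machinery (the valuative criterion $\delta>1$, the local-to-global decomposition $\delta=\inf_p\delta_p$, the Abban--Zhuang refinement along flags $p\in C\subset S_d$), but every quantitative step is deferred: you never produce the list of quintuples $(a_0,a_1,a_2,a_3,d)$ with $I=2$ (itself a nontrivial classification problem, the index-2 analogue of Johnson--Koll\'ar), you never compute a single $S_{S_d}(C)$ or refined local term for any quintuple, and you explicitly label the uniform-in-$(n,m)$ Zariski-decomposition estimates at the quotient singularities as the ``main obstacle'' rather than resolving it. Since the conjecture's entire content is that these infinitely many verifications all come out strictly above $1$, an outline that stops exactly where the difficulty begins establishes nothing; the paper's authors evidently could not complete this either, which is why the statement is a conjecture.

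Two further points need correcting even at the level of the plan. First, your parenthetical on the diagonal family $(1,1,n+1,n+1,2n+2)$ --- ``strictly K-semistable (hence K-polystable with a K\"ahler--Einstein metric)'' --- is a non sequitur: K-semistability does not imply K-polystability, and the paper proves only $\delta\leq 1$ there (running the Lemma of Section 4 at $n=m$ gives $\beta(E)=0$ for the exceptional divisor over $\msp_t$), while $\delta=1$ and the existence of a KE metric are themselves left as a separate conjecture. Second, by Theorem B of \cite{BJ20} as quoted in the paper, $\delta>1$ is equivalent to \emph{uniform} K-stability, not to K-stability; the identification of the two for klt Fano varieties is a later result outside the paper's toolkit, so your opening equivalence should be stated as ``uniform K-stability, hence K-stability.'' Neither correction rescues the proposal: the gap is that the argument's body --- the classification and the explicit, family-uniform volume computations --- is missing.
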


For the case $(1,1,n+1, n+1, 2n+2)$, the delta invariant of the corresponding surface less than or equal to one as it is verified in   Section 4. But  we expect the following; 

 \begin{conj}
Suppose that $S_n$ is quasi-smooth and the quintuple $(a_0,a_1,a_2,a_3,d)$ is $(1,1,n+1, n+1, 2n+2)$.  Then we have that 
$\delta(S_n)=1$ so that it is K-semistable. Moreover the surface   have an orbifold K\"ahler-Einstein metric for any $n$.
 \end{conj}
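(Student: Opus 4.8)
The plan is to establish the two bounds $\delta(S_n)\le 1$ and $\delta(S_n)\ge 1$ separately and then to upgrade K-semistability to the existence of an orbifold Kähler--Einstein metric. I would begin by putting a quasi-smooth member in normal form. Writing $x_0,x_1$ for the weight-$1$ coordinates and $y_0,y_1$ for the weight-$(n+1)$ ones, the defining polynomial has the shape $q(y_0,y_1)+y_0g_0(x_0,x_1)+y_1g_1(x_0,x_1)+h(x_0,x_1)$ with $\deg g_i=n+1$ and $\deg h=2n+2$. A short computation with the affine cone shows that quasi-smoothness forces the quadratic part $q$ to have rank $2$: if $\operatorname{rank}q\le 1$ the cone is singular along a line. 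Hence, after diagonalizing $q$ and translating $y_0,y_1$ by the degree-$(n+1)$ forms $g_1,g_0$, every quasi-smooth member is projectively equivalent to
\[
S_n=\bigl\{\,y_0y_1=\tilde h(x_0,x_1)\,\bigr\}\subset\PP(1,1,n+1,n+1),\qquad \deg\tilde h=2n+2,
\]
where $\tilde h$ has $2n+2$ distinct roots. In this model $S_n$ has precisely two singular points $P_0=[0:0:1:0]$ and $P_1=[0:0:0:1]$, both of type $\tfrac1{n+1}(1,1)$, while $(-K_{S_n})^2=\tfrac{8}{n+1}$, and the projection $[x_0:x_1:y_0:y_1]\mapsto[x_0:x_1]$ realizes $S_n$ as a pencil of rational curves, all meeting $P_0$ and $P_1$, with $2n+2$ reducible members over the roots of $\tilde h$.

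The bound $\delta(S_n)\le 1$ is the easy half. The model is preserved by the one-parameter subgroup $\lambda\cdot(x_0,x_1,y_0,y_1)=(x_0,x_1,\lambda y_0,\lambda^{-1}y_1)$, so $\operatorname{Aut}^0(S_n)$ contains a copy of $\mathbb{C}^{*}$ and is positive-dimensional; since a K-stable Fano has finite automorphism group, $S_n$ is not K-stable and therefore $\delta(S_n)\le 1$. The involution $y_0\leftrightarrow y_1$ is an automorphism that normalizes this torus and inverts its parameter, so the Donaldson--Futaki weight of the associated product degeneration vanishes; equivalently the valuation $\operatorname{wt}_\xi$ induced by the action satisfies $A_{S_n}(\operatorname{wt}_\xi)=S_{S_n}(\operatorname{wt}_\xi)$, so it already realizes the value $1$ and is the expected minimizer.

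The real content is the reverse inequality $\delta(S_n)\ge 1$, that is, K-semistability, and I would prove it with the Abban--Zhuang inductive estimate. It suffices to show $\delta_P(S_n)\ge 1$ for every point $P$, and for each $P$ one chooses a flag $P\in C\subset S_n$ and reduces the positivity $\beta(E)=A_{S_n}(E)-S_{S_n}(E)\ge 0$ for valuations centred at $P$ to one-dimensional computations of $S_{S_n}(C)$ and of the refined invariant of the restriction to $C$. At a general point I would take $C$ to be the fibre of the pencil through $P$, a rational curve with $C\sim\mcO_{S_n}(1)\sim\tfrac12(-K_{S_n})$ and $(C\cdot C)=\tfrac2{n+1}$, for which the volume integrals are elementary. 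The delicate points are the two quotient singularities: near $P_0$ and $P_1$ one must work with weighted blow-ups, whose exceptional divisor $E$ has the small log discrepancy $A_{S_n}(E)=\tfrac2{n+1}$, and one must verify that the matching $S_{S_n}(E)$ makes $A_{S_n}(E)/S_{S_n}(E)$ bottom out at exactly $1$. I expect this local analysis at $P_0,P_1$ to be the main obstacle precisely because the bound is sharp: unlike the strictly K-stable cases there is no slack, so each estimate must be carried out to leading order rather than merely bounded from below.

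Finally, combining the two inequalities gives $\delta(S_n)=1$, hence K-semistability. To produce the orbifold Kähler--Einstein metric I would upgrade this to K-polystability. The reduced automorphism group is reductive (it contains the $\mathbb{C}^{*}$ above and the involution), the Donaldson--Futaki weight vanishes along the only one-parameter subgroup of $\operatorname{Aut}^0$, and one checks that the sole special degenerations compatible with $\beta(\operatorname{wt}_\xi)=0$ are the product ones induced by this torus, so the S-equivalence class of $S_n$ contains $S_n$ itself. K-polystability of the orbifold del Pezzo surface then yields an orbifold Kähler--Einstein metric by the Yau--Tian--Donaldson correspondence in the log Fano setting. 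As a consistency check, the base case $n=0$ is the quintuple $(1,1,1,1,2)$, i.e.\ the smooth quadric $\PP^1\times\PP^1$, which is indeed K-polystable with $\delta=1$.
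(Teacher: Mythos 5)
You should first be aware that the statement you set out to prove is an open conjecture in the paper: the authors do not prove it. What the paper actually establishes (in Section~4, by running the computation of the lemma for $\PP(1,1,n+1,m+1)$ with $n=m$) is only the inequality $\delta(S_n)\leq 1$: the exceptional divisor $E$ of the $(1,1)$-weighted blow-up at the singular point $\msp_t$ satisfies $\beta(E)=0$, hence $A_{S_n}(E)=S_{S_n}(E)$. Your proposal recovers this easy half by a different but legitimate route: your normal form $y_0y_1=\tilde h(x_0,x_1)$ agrees with the paper's $tz+f(x,y)=0$, and the $\CC^*$-action $(y_0,y_1)\mapsto(\lambda y_0,\lambda^{-1}y_1)$ shows $\operatorname{Aut}(S_n)$ is infinite, so $S_n$ is not uniformly $K$-stable and $\delta(S_n)\leq 1$ by Theorem~\ref{delta and k-stable}; the involution $y_0\leftrightarrow y_1$ conjugating the torus generator to its inverse does force the Futaki character of the torus to vanish. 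Up to this point your argument is sound, and the consistency check $n=0$ (the quadric $\PP^1\times\PP^1$) is correct.

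However, everything beyond that point is a plan rather than a proof, and the gap is exactly where the conjecture lives. The two claims that constitute its content --- $\delta(S_n)\geq 1$ and the existence of the orbifold K\"ahler--Einstein metric --- are only sketched: you say you \emph{would} run the Abban--Zhuang induction over flags $P\in C\subset S_n$ and that the analysis at the two $\tfrac{1}{n+1}(1,1)$ points is ``the main obstacle,'' but you never carry out a single one of those volume computations, and because the bound is sharp (there is already a valuation with $\beta=0$) no soft or leading-order estimate can close the argument; every local inequality must come out exactly equal to $1$, which is precisely why the authors could only state this as a conjecture. Likewise, K-semistability alone does not yield a K\"ahler--Einstein metric; you need K-polystability, i.e.\ that every special test configuration with vanishing Donaldson--Futaki invariant is a product, and your phrase ``one checks that the sole special degenerations compatible with $\beta(\operatorname{wt}_\xi)=0$ are the product ones'' is an assertion of the needed classification of minimizing valuations, not a proof of it --- the symmetry argument only handles the one-parameter subgroup you already know about. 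So the proposal establishes only what the paper itself verifies ($\delta(S_n)\leq 1$), by a different mechanism, and leaves the conjecture open.
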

 
 {\em{Acknowledgement.}}
 The first author was supported by the National Research Foundation of Korea(NRF) grant funded by the Korea government(MSIP) (NRF-2020R1A2C4002510). The second author was supported by the National Research Foundation of
Korea(NRF-2020R1A2C1A01008018) and a KIAS Individual Grant (SP037003) via the Center
for Mathematical Challenges at Korea Institute for Advanced Study.

\section{Preliminary}
\subsection{Notation}
Throughout the paper we use the following notations:
\begin{itemize}
    \item For positive integers $a_0$, $a_1$, $a_2$ and $a_3$, $\PP(a_0,a_1,a_2,a_3)$ is the weighted projective space. We assume that $a_0\leq a_1\leq a_2\leq a_3$.
    \item We usually write $x$, $y$, $z$ and $t$ for the weighted homogeneous coordinates of $\PP(a_0,a_1,a_2,a_3)$ with weights $\wt(x)=a_0$, $\wt(y)=a_1$, $\wt(z)=a_2$ and $\wt(t)=a_3$.
    \item $S_d \subset \PP(a_0,a_1,a_2,a_3)$ denotes a quasi-smooth weighted hypersurface given by a quasi-homogenious polynomial of degree $d$.
    \item $H_{*}$ is the hyperplane that is cut out by the equation $* = 0$ in $S_d$.
    \item $\msp_x$ denotes the point on $S_d$ given by $y=z=t=0$. The points $\msp_y$, $\msp_z$ and $\msp_t$ are defined in a similar way.
    \item $-K_{S_d}$ denotes the anti-canonical divisor of $S_d$.
\end{itemize}

\subsection{Foundation}
Let $X$ be a $\QQ$-Fano variety, i.e.\ a normal projective $\QQ$-factorial variety with at most terminal singularities such that $-K_X$ is ample.
By \cite[Theorem A]{BJ20}, we have the following:
\begin{equation}\label{alpha and delta inequalities}
    \frac{\dim(X) + 1}{\dim(X)}\alpha(X)\leq \delta(X) \leq (\dim(X) + 1)\alpha(X).
\end{equation}

\begin{theorem}[{\cite[Theorem B]{BJ20}}]\label{delta and k-stable}
    Let $X$ be a $\QQ$-Fano variety.
    \begin{itemize}
        \item $X$ is $K$-semistable if and only if $\delta(X)\geq 1$;
        \item $X$ is uniformly $K$-stable if and only if $\delta(X) > 1$.
    \end{itemize}
\end{theorem}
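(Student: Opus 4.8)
The statement to establish is the valuative characterization of K-stability through the stability threshold, so the plan is to first realize $\delta(X)$ as an infimum of normalized log discrepancies over valuations, and then match each of the two equivalences with the Fujita--Li valuative criterion. First I would recall the definition of $\delta(X)$ through $m$-basis type divisors: for $m\gg 0$ an $m$-basis type $\QQ$-divisor is $D=\frac{1}{m\,h^0(-mK_X)}\sum_i\{\sigma_i=0\}$ for a basis $\{\sigma_i\}$ of $H^0(X,-mK_X)$, and $\delta_m(X)=\inf_D\lct(X,D)$, with $\delta(X)=\lim_{m\to\infty}\delta_m(X)$. The crucial reduction is the identity
\[
\delta(X)=\inf_{v}\frac{A_X(v)}{S(v)},
\]
where $v$ ranges over divisorial valuations on $X$, $A_X(v)$ is the log discrepancy, and $S(v)=\lim_m S_m(v)$ with
\[
S_m(v)=\frac{1}{m\,N_m}\sum_{j\geq 1}\dim\bigl\{\sigma\in H^0(-mK_X):v(\sigma)\geq j\bigr\},\qquad N_m=h^0(-mK_X).
\]
To obtain this I would compute, for fixed $v$, the value $v(D)$ of a generic $m$-basis type divisor via the jumping numbers of the filtration of $H^0(-mK_X)$ by $\{v\geq j\}$, so that $\lct(X,D)=\min_v A_X(v)/v(D)$ and $\sup_D v(D)=S_m(v)$; this gives $\delta_m(X)=\inf_v A_X(v)/S_m(v)$, and asymptotic Riemann--Roch on the associated Okounkov body yields $S_m(v)\to S(v)$ and hence the displayed formula.

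Next I would invoke the valuative criterion for K-stability. Via the Boucksom--Hisamoto--Jonsson dictionary, a test configuration of $-K_X$ corresponds to a finitely generated filtration of $\bigoplus_m H^0(-mK_X)$, and its Donaldson--Futaki invariant admits a non-Archimedean expression $\mathrm{DF}^{\mathrm{NA}}$. Restricting to the filtration induced by a single divisorial valuation $v$ (the deformation-to-the-normal-cone type test configurations) one reads off
\[
\mathrm{DF}(v)=A_X(v)-S(v)=:\beta(v),
\]
and, after checking that $J^{\mathrm{NA}}$ is comparable to the relevant $S$-type norm, the Fujita--Li criterion states that $X$ is K-semistable iff $\beta(v)\geq 0$ for all divisorial $v$, and uniformly K-stable iff $\beta(v)\geq\epsilon\,S(v)$ for some fixed $\epsilon>0$ and all $v$.

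With both ingredients in place the two equivalences follow formally. Since $\delta(X)=\inf_v A_X(v)/S(v)$ and $S(v)>0$, we have $\delta(X)\geq 1$ iff $A_X(v)\geq S(v)$, i.e.\ $\beta(v)\geq 0$, for every $v$, which by the criterion is exactly K-semistability; likewise $\delta(X)>1$ holds iff there is $\epsilon>0$ with $A_X(v)\geq(1+\epsilon)S(v)$, i.e.\ $\beta(v)\geq\epsilon\,S(v)$, for all $v$, which is precisely uniform K-stability.

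The main obstacle is the combination of the convergence $\delta_m\to\inf_v A_X(v)/S(v)$ with the uniform half of the criterion: one must prove that the purely static infimum over divisorial valuations coincides with the dynamic infimum of normalized Donaldson--Futaki invariants over \emph{all} test configurations. This requires the full non-Archimedean pluripotential theory — uniform control of $S_m(v)\to S(v)$ in $v$, the approximation of an arbitrary filtration (equivalently, an arbitrary test configuration) by divisorial valuations, and the equivalence of $J^{\mathrm{NA}}$ with the $S$-type norm — so that a lower bound on $\beta$ over divisorial valuations upgrades to the uniform estimate defining uniform K-stability rather than merely K-stability.
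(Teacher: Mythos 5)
The paper gives no proof of this statement: it is quoted verbatim from Blum--Jonsson \cite{BJ20} as background, so there is no internal argument to compare against. Your outline is, in substance, the proof from that cited source --- the valuative formula $\delta(X)=\inf_v A_X(v)/S(v)$ obtained via basis-type divisors and filtrations, combined with the Fujita--Li valuative criterion, with the genuine technical obstacles (uniformity of $S_m \to S$ over valuations, approximation of filtrations, and comparability of $J^{\mathrm{NA}}$ with the $S$-type norm) correctly flagged --- so it is a correct sketch that takes the same route as the proof the paper relies on.
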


\begin{definition}
    We define the pseudo-effective threshold $\tau(E)\in \RR_{>0}$ of $E$ with respect to $-K_X$ as
    \begin{equation*}
        \tau(E) = \sup\{x\in\RR_{>0} ~|~ \vol_{X}(-K_X - xE) > 0\}.
    \end{equation*}
\end{definition}

\begin{theorem}[\cite{F19b}]\label{beta invariant}
    Let $X$ be a $n$-dimensional $\QQ$-Fano variety. For an arbitrary prime divisor $E$ over $X$, we set
    \begin{equation*}
        \beta(E) = A_X(E)(-K_X)^n - \int_{0}^{\tau(E)}\vol_X(\pi^*(-K_X) - x E) \dd x
    \end{equation*}
    where $A_X(E)$ is the log discrepancy of $E$.
    Then we have $(X, -K_X)$ is $K$-semistable if and only if $\beta(E)\geq 0$ for any $E$.
\end{theorem}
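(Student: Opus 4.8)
The plan is to prove the two implications of this valuative criterion separately, passing through the dictionary between test configurations and filtrations of the anticanonical section ring. Write $R=\bigoplus_{m\geq 0}H^0(X,-mK_X)$, and recall that $X$ is $K$-semistable precisely when the Donaldson--Futaki invariant $\mathrm{DF}(\mcX,\mcL)$ is non-negative for every normal test configuration $(\mcX,\mcL)$ of $(X,-K_X)$. The first step is to set up the now-standard correspondence: every normal test configuration induces a $\ZZ$-filtration of $R$ compatible with multiplication, and conversely every finitely generated such filtration arises from a test configuration. Under this correspondence the Donaldson--Futaki invariant can be rewritten as a non-Archimedean functional computed from the Duistermaat--Heckman measure of the filtration, that is, from the asymptotic distribution of the jumping numbers of the filtration on $H^0(X,-mK_X)$ as $m\to\infty$.

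For the direction ``$K$-semistable $\Rightarrow\beta(E)\geq 0$'', I would start with a prime divisor $E$ over $X$ that is \emph{dreamy}, so that the filtration of $R$ by $\ord_E$ is finitely generated and yields an honest test configuration. The second step is the computation showing that, for this test configuration, $\mathrm{DF}$ is a positive multiple of $\beta(E)$: the log discrepancy term $A_X(E)(-K_X)^n$ reproduces the entropy part of $\mathrm{DF}$, while the integral $\int_0^{\tau(E)}\vol_X(\pi^*(-K_X)-xE)\,\dd x$ reproduces the energy part, the pseudo-effective threshold $\tau(E)$ entering as the point where the restricted volume vanishes. Non-negativity of $\mathrm{DF}$ then forces $\beta(E)\geq 0$, and a limiting argument, using continuity of $A_X(E)$ and of the volume integral under suitable approximations, extends the inequality from dreamy divisors to all prime divisors over $X$.

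For the converse I would take an arbitrary normal test configuration, pass to its induced filtration, and bound $\mathrm{DF}$ from below by an infimum of $\beta$-type quantities over divisorial valuations. The key step is Fujita approximation: one approximates the volume of the filtration, computed via its Okounkov body, by data coming from divisorial valuations, so that the non-Archimedean functional expressing $\mathrm{DF}$ is squeezed between expressions of the form $\beta(E)/(-K_X)^n$. Once $\beta(E)\geq 0$ for all $E$ is assumed, this squeeze yields $\mathrm{DF}\geq 0$ for every test configuration, hence $K$-semistability.

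The main obstacle is precisely this last approximation step. Translating the sign of the Donaldson--Futaki invariant of a genuinely higher-dimensional degenerating family into a statement about individual divisorial valuations requires the full strength of the theory of Okounkov bodies and Fujita approximation, together with careful control of the Duistermaat--Heckman measures and the integrability of the volume function near $\tau(E)$; showing that the relevant infimum over all test configurations is governed by divisorial valuations is the technical heart of the result.
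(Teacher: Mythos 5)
The paper offers no proof of this statement at all: it is quoted as a black box from Fujita's work \cite{F19b}, so your proposal can only be measured against the argument in that reference (and Li's closely related one), not against anything internal to this article. Your outline does reproduce the general shape of that theory --- the dictionary between test configurations and filtrations of $R=\bigoplus_m H^0(X,-mK_X)$, Duistermaat--Heckman measures, and the splitting of $\mathrm{DF}$ into an entropy term matching $A_X(E)(-K_X)^n$ and an energy term matching the volume integral --- but both directions have a genuine gap at exactly the point where the real work happens. For ``$K$-semistable $\Rightarrow\beta(E)\geq 0$'' you establish the inequality only for dreamy divisors and then invoke ``a limiting argument'' to reach arbitrary $E$. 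No such limit exists in the form you describe: there is no topology on the set of prime divisors over $X$ in which dreamy divisors are known to be dense and $\beta$ is known to be continuous; density of dreamy divisors is itself a finite-generation statement one cannot assume. The actual proof fixes an arbitrary $E$ and approximates the possibly non-finitely-generated filtration $\mathcal{F}_{\ord_E}$ of $R$ by its finitely generated truncations; these produce (semi-)test configurations whose normalized Donaldson--Futaki invariants converge to a quantity comparable to $\beta(E)$, by Boucksom--Chen-type convergence of Duistermaat--Heckman measures. The approximation is performed on the filtration of a \emph{fixed} divisor, not on the divisor itself.

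For the converse you appeal to ``Fujita approximation'' of volumes via Okounkov bodies to squeeze $\mathrm{DF}$ of an arbitrary test configuration between $\beta$-type expressions. That is not the mechanism, and it is unclear how volume approximation alone could ever single out \emph{divisorial} valuations on $X$ from a higher-dimensional degeneration. What the known proofs actually do is a concrete geometric step: either (i) for a normal test configuration $\mcX$, restrict the valuations $\ord_{E_i}$ of the irreducible components $E_i$ of the central fiber to $\CC(X)$ embedded in $\CC(\mcX)$ by the Gauss extension, and bound $\mathrm{DF}$ from below by positive combinations of $\beta$ of these induced divisorial valuations (Fujita); or (ii) run an MMP to reduce to special test configurations with irreducible central fiber, for which $\mathrm{DF}$ equals a positive multiple of $\beta$ of a single divisorial valuation (Li, Li--Xu). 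Without one of these two inputs, your squeeze has no bridge from test configurations to divisors, so the direction ``$\beta\geq 0$ for all $E$ $\Rightarrow$ $K$-semistable'' remains unproved in your sketch.
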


\begin{definition}
Let $(X,D)$ be a pair, that is, $D$ is an effective $\QQ$-divisor, and let $\msp \in X$ be a point. We define the {\it log canonical threshold} (LCT, for short) of $(X,D)$ and the {\it log canonical threshold} of $(X,D)$ {\it at} $\msp$ to be the numbers
\[
\begin{split}
\lct (X,D) &= \sup \{\, c \mid \text{$(X, c D)$ is log canonical} \,\}, \\
\lct_{\msp} (X,D) &= \sup \{\, c \mid \text{$(X, c D)$ is log canonical at $\msp$} \,\},
\end{split}
\]
respectively.
We define
\[
\lct_{\msp} (X) = \inf \{\, \lct_{\msp} (X,D) \mid D \text{ is an effective $\QQ$-divisor}, D \equiv -K_X \,\},
\]
and for a subset $\Sigma \subset X$, we define
\[
\lct_{\Sigma} (X) = \inf \{\, \lct_{\msp} (X) \mid \msp \in \Sigma \, \}.
\]
The number $\alpha (X) := \lct_X (X)$ is called the {\it global log canonical threshold} (GLCT, for short) or the {\it alpha invariant} of $X$.
\end{definition} 

Let $X$ be a surface with quotient singularities. And let $D$ be an effective $\QQ$-divisor. If $D$ is Cartier then its volume is the number
\begin{equation}\label{volume}
    \vol(D) = \limsup_{k\in \NN}\frac{h^0(\mcO_X(kD))}{k^2/ 2!}.
\end{equation}
When $D$ is a $\QQ$-divisor one can define the volume $\vol(D)$ of $D$ as in (\ref{volume}) taking the $\limsup$ over those $k$ for which $kD$ is integral. Moreover we can define its volume using the identity
\begin{equation*}
    \vol(D) = \frac{1}{\lambda^2}\vol(\lambda D)
\end{equation*}
for an appropriate positive rational number $\lambda$. The volume of $D$ depends only upon its numerical equivalence class (see \cite{Laz-positivity-in-AG} for details).

If $D$ is not pseudoeffective, then $\vol(D) = 0$. If $D$ is pseudoeffective, its volume can be computed using its Zariski decomposition \cite{Prokhorov, BauerKuronyaSzemberg}. Namely, if $D$ is pseudoeffective, then there exists a nef $\RR$-divisor $P$ on the surface $X$ such that
\begin{equation*}
    D\sim_{\RR} P + \sum_{i=1}^n a_i C_i
\end{equation*}
where each $C_i$ is an irreducible curve on $X$ with $P\cdot C_i$, each $a_i$ is a non-negative real number, and the intersection form of the curves $C_1,\ldots ,C_n$ is negative definite. Such decomposition is unique, and it follows from [BKS04, Corollary 3.2] that
\begin{equation*}
    \vol(D) = \vol(P) = P^2.
\end{equation*}

We consider a cyclic quotient singularity $V = \CC^2 /\ZZ_m(a_1, a_2)$, where $a_1$ and $a_2$ are positive integers which are coprime. Let $x_1$ and $x_2$ be weighted coordinates of $V$ with weights $\wt(x_1) = a_1$ and $\wt(x_2) = a_2$. And let $\phi\colon W\to V$ be the weighted blow-up at the origin of $V$ with weights $\wt(x_1) = a_1$ and $\wt(x_2) = a_2$. Then we have the following:
\begin{equation*}
    K_W\equiv \phi^*(K_V) + \left(-1 + \frac{a_1}{m} + \frac{a_2}{m}\right)E
\end{equation*}
where $E$ is the exceptional divisor of $\phi$ and
\begin{equation*}
    E^2 = -\frac{m}{a_1a_2}.
\end{equation*}
Let $H_{x_i}$ be the hyperplane that is cut out by $x_i = 0$ in $V$. Then we have
\begin{equation*}
    \phi^*(H_{x_i}) = \bar{H}_{x_i} + \frac{a_i}{m}E
\end{equation*}
where $\bar{H}_{x_i}$ is the strict transform of $H_{x_i}$.

\section{Singular del Pezzo surfaces with small $\alpha$-invariant}
In this section, we estimate the $\alpha$-invariants of the following quasi-smooth weighted hypersurfaces:
\begin{itemize}
    \item $S_{27}$ : a quasi-smooth weighted hypersurface in $\PP(1,6,9,13)$ of degree $27$;
    \item $S_{45}$ : a quasi-smooth weighted hypersurface in $\PP(1,9,15,22)$ of degree $45$;
    \item $S_{6n+9}$ : a quasi-smooth weighted hypersurface in $\PP(1,3,3n+3,3n+4)$ of degree $6n+9$
\end{itemize}
where $n$ is a positive integer. By suitable coordinate changes we can assume that $S_{27}$ is given by a quasi-homogeneous polynomial
\begin{equation*}
    t^2x + z^3 + z^2f_9(x,y) + zf_{18}(x,y) + f_{27}(x,y) = 0
\end{equation*}
where each $f_i(x,y)$ is a quasi-homogeneous polynomial of degree $i$ with weights $\wt(x) = 1$, $\wt(y) = 6$, $\wt(z) = 9$ and $\wt(t) = 13$, $S_{45}$ by
\begin{equation*}
    z^3 + y^5 + xf(x,y,z,t) = 0
\end{equation*}
where $f(x,y,z,t)$ is a quasi-homogeneous polynomial of degree $44$ with weights $\wt(x) = 1$, $\wt(y) = 9$, $\wt(z) = 15$ and $\wt(t) = 22$ and $S_{6n+9}$ by
\begin{equation*}
    t^2x + txf(x,y,z) + z^2y + azy^{n+2} + by^{2n+3} + xg(x,y,z) = 0
\end{equation*}
where $a$ or $b$ are non-zero constants with weights $\wt(x) = 1$, $\wt(y) = 3$, $\wt(z) = 3n+3$ and $\wt(t) = 3n+4$.

For the convenience, we use $S$ for the surfaces $S_{27}$, $S_{45}$ and $S_{6n+9}$. Let $H_x$ be the hyperplane that is cut out by $x = 0$ in $S$. Then it is isomorphic to the variety
\begin{equation*}
    (z^3 + zy^3 = 0)\subset \PP(6,9,13) \textrm{~for~} S_{27},
\end{equation*}
\begin{equation*}
    (z^3 + y^5 = 0)\subset \PP(9,15,22) \textrm{~for~} S_{45},
\end{equation*}
\begin{equation*}
    (z^2y + azy^{n+2} + by^{2n+3} = 0)\subset \PP(3,3n+3,3n+4) \textrm{~for~} S_{6n+9}.
\end{equation*}
Then it is easy the see that 
\begin{equation*}
    \lct(S_{27}, H_x) = \frac{5}{9},
\end{equation*}
\begin{equation*}
    \lct(S_{45}, H_x) = \frac{8}{15},
\end{equation*}
\begin{equation*}
    \lct(S_{6n+9}, H_x) = \frac{n+2}{2n+3}
\end{equation*}
which implies that $\alpha(S)<\frac{1}{3}$.
\begin{corollary}
    The singular del Pezzo surfaces $S_{27}$, $S_{45}$ and $S_{6n+9}$ are not $K$-semistable. Furthermore, they do not have an orbifold K\"alher-Einstein metric.
\end{corollary}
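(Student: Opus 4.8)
The plan is to deduce both assertions directly from the bound $\alpha(S) < \frac{1}{3}$ just established, feeding it into the comparison inequality \eqref{alpha and delta inequalities} and then into Theorem \ref{delta and k-stable}. Each of $S_{27}$, $S_{45}$, $S_{6n+9}$ is a log del Pezzo surface with quotient singularities, so $\dim(S) = 2$ and the right-hand side of \eqref{alpha and delta inequalities} specializes to $\delta(S) \leq 3\,\alpha(S)$. Combining this with the strict inequality $\alpha(S) < \frac{1}{3}$ yields
\begin{equation*}
    \delta(S) \leq 3\,\alpha(S) < 3\cdot\frac{1}{3} = 1,
\end{equation*}
so that $\delta(S) < 1$ for all three families at once; the only family-dependent input is the value of $\lct(S,H_x)$ recorded above, which in each case is small enough to push $3\,\alpha(S)$ below $1$.

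From here the first assertion is immediate. By Theorem \ref{delta and k-stable}, $K$-semistability of $S$ is equivalent to $\delta(S)\geq 1$; since we have shown $\delta(S) < 1$, the surface $S$ cannot be $K$-semistable.

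For the second assertion I would argue by contradiction. If $S$ admitted an orbifold K\"ahler--Einstein metric, then by the Yau--Tian--Donaldson correspondence in the klt (orbifold) Fano setting it would be $K$-polystable, and in particular $K$-semistable; this contradicts $\delta(S) < 1$. Hence no orbifold K\"ahler--Einstein metric can exist on $S$, and the two conclusions follow simultaneously for all three families.

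The step I expect to require the most care is not the numerical chain, which is essentially mechanical once $\alpha(S) < \frac{1}{3}$ is in hand, but the verification that the cited stability machinery genuinely applies to these surfaces. One must confirm that each $S$ falls within the scope of \eqref{alpha and delta inequalities}, Theorem \ref{delta and k-stable}, and the orbifold K\"ahler--Einstein/$K$-polystability equivalence: quasi-smoothness and well-formedness guarantee at worst quotient (hence klt) singularities, and positivity of the index $I = 2$ guarantees ampleness of $-K_S$, so $S$ is a klt del Pezzo surface of dimension $2$. With this foundational check in place, the derivation $\delta(S)<1 \Rightarrow$ non-$K$-semistable $\Rightarrow$ no orbifold K\"ahler--Einstein metric is completely direct.
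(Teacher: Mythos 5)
Your proposal is correct and follows essentially the same route as the paper: the paper's proof is exactly the chain $\delta(S)\leq 3\,\alpha(S)<1$ via inequality \eqref{alpha and delta inequalities} together with Theorem \ref{delta and k-stable}, with the K\"ahler--Einstein conclusion left implicit via the standard fact that such a metric forces $K$-(semi)stability. Your additional remarks on why the klt/Fano hypotheses of the cited machinery are satisfied are a reasonable (and welcome) elaboration of what the paper takes for granted, not a different argument.
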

\begin{proof}
    By the inequality (\ref{alpha and delta inequalities}) and Lemma \ref{delta and k-stable} we see that the singular del Pezzo surfaces $S_{27}$, $S_{45}$ and $S_{6n+9}$ are not $K$-semistable.
\end{proof}

\section{Singular del Pezzo surfaces with negative $\beta$-invariant}
In this section, we prove that there are prime divisors $E$ over the following quasi-smooth weighted hypersurfaces such that $\beta(E) < 0$.

The following quasi-smooth weighted hypersurface is the case of $n = 0$ for the quintuple $(1,3,3n+3,3n+4,6n+9)$. 
\begin{lemma}
    Let $S\subset\PP(1, 3, 3, 4)$ be a quasi-smooth weighted hypersurface of degree $9$. Then $S$ is not $K$-semistable.
\end{lemma}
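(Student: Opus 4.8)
The plan is to exhibit a single prime divisor $E$ over $S$ with $\beta(E) < 0$ and then invoke Theorem \ref{beta invariant}. First I would record the global numerics: since the index is $1+3+3+4-9 = 2$, we have $-K_S \sim \mcO_S(2)$ and $(-K_S)^2 = I^2 d/(a_0 a_1 a_2 a_3) = 4\cdot 9/36 = 1$. The two singular points are $\msp_z$, a cyclic quotient singularity of type $\frac13(1,1)$, and $\msp_t$, of type $\frac14(1,1)$; these are read off by eliminating, respectively, $y$ (using the term $z^2 y$) and $x$ (using the term $t^2 x$) in the local charts.

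The geometric input special to the case $n=0$ is that $a_1 = a_2 = 3$, so the term $z^2 y + a z y^2 + b y^3$ is divisible by $y$ and the hyperplane $H_x = \{x = 0\}$ splits as $L + L_1' + L_2'$, where $L = \{x = y = 0\} \cong \PP(3,4)$ and $L_j' = \{x = 0,\ z = \lambda_j y\}$ are the three rational curves cut out by the factors of $y(z^2 + azy + by^2)$. All three pass through $\msp_t$ and meet there pairwise transversally, so the orbifold intersection numbers are $L\cdot L_j' = L_1'\cdot L_2' = \frac14$; together with $-K_S\cdot L = \deg \mcO_{\PP(3,4)}(2) = \frac16$ and $\mcO_S(1)\cdot L = \frac1{12}$ this forces $L^2 = (L_j')^2 = -\frac{5}{12}$ through the relation $\mcO_S(1) = L + L_1' + L_2'$.

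The destabilizing divisor I would use is the exceptional curve $E$ of the $(1,1)$-weighted blow-up $\pi\colon \tilde S \to S$ at $\msp_t$, for which $A_S(E) = \frac{1+1}{4} = \frac12$ and $E^2 = -4$. Using the pull-back relations $\pi^* L = \tilde L + \frac14 E$ (and likewise for the $L_j'$) one gets $\tilde L\cdot E = 1$, $\tilde L^2 = -\frac23$, and, because the three tangent directions at $\msp_t$ are distinct, the strict transforms $\tilde L, \tilde L_1', \tilde L_2'$ become pairwise disjoint. I would then compute the Zariski decomposition of $\pi^*(-K_S) - xE$ in two chambers: the class is nef with $\vol = 1 - 4x^2$ for $x\in[0,\frac16]$, while for $x\in[\frac16,\frac32]$ one subtracts $c(\tilde L + \tilde L_1' + \tilde L_2')$ with $c = \frac{3x}{2} - \frac14$, yielding $\vol = \frac12(x - \frac32)^2$ and hence $\tau(E) = \frac32$. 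Integrating gives $\int_0^{3/2}\vol\,dx = \frac59$, so $\beta(E) = \frac12\cdot 1 - \frac59 = -\frac1{18} < 0$, and $S$ is not $K$-semistable.

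The main obstacle is the Zariski-decomposition step: I must verify that $E$, $\tilde L$, $\tilde L_1'$, $\tilde L_2'$ are genuinely the only curves obstructing nefness of $\pi^*(-K_S) - xE$ on the whole interval $[0,\tau(E)]$, so that in particular no further curve through $\msp_z$ or $\msp_t$ (for instance a component of $\{t=0\}$) enters the negative part earlier and lowers $\tau(E)$, and that quasi-smoothness forces the three slopes $\lambda_j$ to be distinct so that the strict transforms really separate after blow-up. Handling the orbifold intersection theory at the two quotient singularities correctly, via the local formulas $E^2 = -m/(a_1 a_2)$ and $\phi^*(H_{x_i}) = \bar H_{x_i} + \frac{a_i}{m}E$ recorded in the preliminaries, is the other technical point.
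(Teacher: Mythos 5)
Your proposal is correct and is essentially the paper's own proof: the same $(1,1)$-weighted blow-up at $\msp_t$ with $A_S(E)=\tfrac12$, $E^2=-4$, the same splitting of $H_x$ into three curves with $\bar{L}_i^2=-\tfrac23$, $\bar{L}_i\cdot\bar{L}_j=0$, $\bar{L}_i\cdot E=1$ after blow-up, the same two-chamber Zariski decomposition with $\tau(E)=\tfrac32$, and the same numbers $\int_0^{3/2}\vol \dd\lambda=\tfrac59$ and $\beta(E)=\tfrac12-\tfrac59=-\tfrac1{18}<0$. Two minor remarks: your count of singular points is off (quasi-smoothness forces the cubic on $\{x=t=0\}$ to have three distinct roots, each giving a $\tfrac13(1,1)$ point, so $S$ has four singular points, though only $\msp_t$ enters the argument), and the nefness issue you flag as the main obstacle is immediate, because the candidate positive part $\tfrac32(\bar{L}_1+\bar{L}_2+\bar{L}_3)+E$ is an effective divisor, hence nef as soon as it meets each of its own components nonnegatively --- which you verified --- after which uniqueness of the Zariski decomposition rules out any other curve entering the negative part or lowering $\tau(E)$.
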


\begin{proof}
    By a suitable coordinate change we can assume that $S$ is given by a quasi-homogeneous polynomial
    \begin{equation*}
        t^2x + yz(ay + bz) + x^3f(x,y,z) = 0
    \end{equation*}
    where $a$ or $b$ are non-zero constants with weights $\wt(x) = 1$, $\wt(y) = 3$, $\wt(z) = 3$ and $\wt(t) = 4$ and $f(x,y,z)$ is a quasi-homogeneous polynomial of degree $6$. Let $H_x$ be the hyperplane that is cut out by $x = 0$ in $S$. We write
    \[
        H_x = L_1 + L_2 + L_3
    \]
    where $L_1$, $L_2$ and $L_3$ are the curves that are given by $x=y=0$, $x=z=0$ and $x=ay+bz=0$ in $\PP(1,3,3,4)$.
    
    Meanwhile. $S$ is singular at the point $\msp_t$ of type $\frac{1}{4}(1,1)$.

    In a neighborhood of $\msp_t$, we may regard that $y$ and $z$ are local weighted coordinates. Let $\pi\colon Y\to S$ be the weighted blow-up at $\msp_t$ with weights $\wt(y) = 1$ and $\wt(z) = 1$. Then we have
    \[
        K_Y \equiv \pi^*(K_S) - \frac{1}{2}E
    \]
    where $E$ is the exceptional divisor of $\pi$. And we have
    \begin{equation*}
        \pi^*(H_x) = \bar{H}_x = \bar{L}_1 + \bar{L}_2 + \bar{L}_3
    \end{equation*}
    where $\bar{H}_x$, $\bar{L}_1$, $\bar{L}_2$ and $\bar{L}_3$ are strict transforms of $H_x$, $L_1$, $L_2$ and $L_3$, respectively. Then we have the following intersection numbers:
    \begin{equation*}
        E^2 = -4,\qquad \bar{L}_1^2 = \bar{L}_2^2 = \bar{L}_3^2 = -\frac{2}{3},\qquad \bar{L}_1\cdot \bar{L}_2 = \bar{L}_2\cdot\bar{L}_3 = \bar{L}_3\cdot \bar{L}_1 = 0.
    \end{equation*}
    We claim that 
    \begin{equation*}
        \beta(E) = A_S(E)(-K_S)^2 - \int_{0}^{\tau (E)}\vol(\pi^*(-K_S) - \lambda E) \dd \lambda
    \end{equation*}
    is not positive. To prove this, we consider the $\QQ$-divisor
    \begin{equation*}
        \pi^*(-K_S) - \lambda E\equiv 2\bar{L}_1 + 2\bar{L}_2 + 2\bar{L}_3 + \left(\frac{3}{2} - \lambda\right)E.
    \end{equation*}
    Since $\bar{L}_i^2 < 0$ for each $\bar{L}_i$, we have $\tau(E) = \frac{3}{2}$, that is, $\vol(\pi^*(-K_S) - \lambda E) = 0$ for $\lambda < \tau(E)$. 

    We have
    \begin{equation*}
        \left(2\bar{L}_1 + 2\bar{L}_2 + 2\bar{L}_3 + \left(\frac{3}{2} - \lambda \right)E\right)\cdot \bar{L}_i = \frac{1}{6} - \lambda
    \end{equation*}
    for $i=1,2,3$. They imply that $\pi^*(-K_S) - \lambda E$ is nef for $\lambda\leq \frac{1}{6}$. Thus we have
    \begin{equation*}
        \vol(\pi^*(-K_S) - \lambda E) = 1 - 4\lambda^2
    \end{equation*}
    for $\lambda\leq \frac{1}{6}$. 

    We assume that $\frac{1}{6}\leq \lambda \leq \frac{3}{2}$. Since $\left(\frac{3}{2}\left(\bar{L}_1 + \bar{L}_2 + \bar{L}_3\right) + E\right)\cdot \bar{L}_i = 0$ for $i=1,2,3$, and the intersection form of $\bar{L}_1$, $\bar{L}_2$, $\bar{L}_3$ is negative definite, the positive term of the Zariski decomposition of $\pi^*(-K_S) - \lambda E$ is
    \begin{equation*}
        \left(\frac{3}{2} - \lambda\right)\left(\frac{3}{2}\left(\bar{L}_1 + \bar{L}_2 + \bar{L}_3\right) + E\right).
    \end{equation*}
    Thus
    \begin{equation*}
        \vol(\pi^*(-K_S) - \lambda E) = \frac{1}{2}\left(\frac{3}{2} - \lambda \right)^2
    \end{equation*}
    for $\frac{1}{6}\leq \lambda \leq \frac{3}{2}$. From the above equations we have

    \begin{equation*}
        \displaystyle\int_{0}^{\tau(E)}\vol(\pi^*(-K_S) - \lambda E) \dd \lambda =\displaystyle\int_{0}^{\frac{1}{6}} 1 - 4\lambda^2 \dd \lambda + \displaystyle\int_{\frac{1}{6}}^{\frac{3}{2}} \frac{1}{2}\left(\frac{3}{2} - \lambda\right)^2 \dd \lambda = \frac{5}{9}.
    \end{equation*}
    Moreover, we have $A_S(E)(-K_S)^2 = \frac{1}{2}$. As a result, we see that
    \begin{equation*}
        \beta(E) = -\frac{1}{18} < 0.
    \end{equation*}
    By Theorem \ref{beta invariant}, $S$ is not $K$-semistable.
\end{proof}

\begin{lemma}
    Let $S\subset\PP(1, 1, n+1, m+1)$ be a quasi-smooth weighted hypersurface of degree $n + m + 2$ where $n$ and $m$ are non-negative integers such that $n < m$. Then $S$ is not $K$-semistable.
\end{lemma}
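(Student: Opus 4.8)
The plan is to produce a single divisor $E$ over $S$ with $\beta(E)<0$ and conclude by Theorem \ref{beta invariant}. First I would put the equation in normal form. Since $d=n+m+2<2(m+1)=2\,\wt(t)$, no $t^2$ term can occur, so $t$ appears only through $t\cdot(\text{degree }n+1)$; quasi-smoothness at $\msp_t$ forces the coefficient of $tz$ to be nonzero, and after a coordinate change I may assume $S=\{tz+\phi(x,y,z)=0\}$ with $\phi$ quasi-homogeneous of degree $n+m+2$, where moreover $\phi(x,y,0)\ne 0$ (otherwise $z\mid F$ and $S$ is not quasi-smooth). In this form $S$ has a cyclic quotient singularity of type $\frac{1}{m+1}(1,1)$ at $\msp_t$. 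I then take $\pi\colon Y\to S$ to be the weighted blow-up at $\msp_t$ with weights $(1,1)$ and exceptional divisor $E$, so that $A_S(E)=\frac{2}{m+1}$, $E^2=-(m+1)$, and $(-K_S)^2=\frac{4(n+m+2)}{(n+1)(m+1)}$.

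The geometric heart of the argument is the curve $H_z=\{z=0\}$, which satisfies $H_z\sim(n+1)H_x$, passes through $\msp_t$, and has local equation $\phi(x,y,0)=0$ there; hence $\ord_E(H_z)=n+m+2$ and its strict transform is $\bar H_z=\pi^*H_z-\frac{n+m+2}{m+1}E$. A direct computation gives $\bar H_z^2=-(n+m+2)<0$ and $\bar H_z\cdot E=n+m+2$, while $\pi^*(-K_S)\cdot E=0$ and, using $-K_S\equiv\frac{2}{n+1}H_z$, $\pi^*(-K_S)\cdot\bar H_z=\frac{2(n+m+2)}{m+1}$.

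I would then compute $\vol(\pi^*(-K_S)-\lambda E)$ via its Zariski decomposition. For $0\le\lambda\le\frac{2}{m+1}$ the divisor is nef, the first wall being the locus where it meets $\bar H_z$, so $\vol=(-K_S)^2-(m+1)\lambda^2$. For $\frac{2}{m+1}\le\lambda$ the negative part is exactly $(\lambda-\frac{2}{m+1})\bar H_z$ and the positive part is $P=cR$ with
\[
R:=\pi^*H_z-\frac{n+1}{m+1}E,\qquad c:=\frac{2(n+m+2)}{(n+1)(m+1)}-\lambda,
\]
where $R\cdot\bar H_z=0$, $R^2=n+1$, and $R=\frac{n+1}{2}\bigl(\pi^*(-K_S)-\frac{2}{m+1}E\bigr)$ is nef. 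Thus $\vol=(n+1)c^2$ on this range and $\tau(E)=\frac{2(n+m+2)}{(n+1)(m+1)}$ (where $c=0$). Integrating the two pieces, the terms involving $n+m+2$ cancel against $A_S(E)(-K_S)^2=\frac{8(n+m+2)}{(n+1)(m+1)^2}$, and I obtain
\[
\beta(E)=\frac{8}{3}\left(\frac{1}{(m+1)^2}-\frac{1}{(n+1)^2}\right),
\]
which is negative precisely because $n<m$. By Theorem \ref{beta invariant}, $S$ is not $K$-semistable.

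The step I expect to be the main obstacle is justifying the Zariski decomposition in the second chamber, i.e.\ that beyond the nef threshold the negative part is entirely carried by $\bar H_z$ and that no other negative curve on $Y$ pushes $P$ out of the nef cone. This is controlled by the identity $R=\frac{n+1}{2}\bigl(\pi^*(-K_S)-\frac{2}{m+1}E\bigr)$: the right-hand side is the boundary divisor of the first (nef) chamber, hence nef, and it is orthogonal to $\bar H_z$, so $\pi^*(-K_S)-\lambda E=cR+(\lambda-\frac{2}{m+1})\bar H_z$ is genuinely the Zariski decomposition on $[\frac{2}{m+1},\tau(E)]$. A secondary, routine point that must nevertheless be recorded is that quasi-smoothness forces $\phi(x,y,0)\ne0$ and makes the singularity at $\msp_t$ exactly $\frac{1}{m+1}(1,1)$, which is what validates all the intersection numbers above.
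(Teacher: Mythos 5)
Your proposal is, in skeleton and in every numerical output, the paper's own proof: the same weighted blow-up at $\msp_t$ with weights $(1,1)$, the same log discrepancy $A_S(E)=\frac{2}{m+1}$, the same wall at $\lambda=\frac{2}{m+1}$ and threshold $\tau(E)=\frac{2(n+m+2)}{(n+1)(m+1)}$, and your positive part $cR$ is literally the paper's positive part, since $R=\pi^*H_z-\frac{n+1}{m+1}E=\bar H_z+E$; the resulting $\beta(E)=\frac{8}{3}\bigl(\frac{1}{(m+1)^2}-\frac{1}{(n+1)^2}\bigr)$ agrees with the paper's value.

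However, the step you yourself flag as the main obstacle contains a genuine gap which your proposed resolution does not close: you treat $\bar H_z$ as a single irreducible negative curve, and it is not. Quasi-smoothness forces $\phi(x,y,0)$ to be squarefree, not merely nonzero (a repeated factor $\ell^2\mid\phi(x,y,0)$ produces a singular point of the affine cone at $z=0$, $t=-\phi_z$, $\ell=0$), so $H_z=\sum_{i=1}^{n+m+2}L_i$ is a union of $n+m+2$ distinct curves through $\msp_t$, and your candidate negative part is $(\lambda-\frac{2}{m+1})\sum_i\bar L_i$. Both the Zariski-decomposition criterion and the first-chamber nefness must be verified against each irreducible component $\bar L_i$ separately: one needs $P\cdot\bar L_i=0$ for every $i$, negative definiteness of the intersection matrix of the $\bar L_i$, and $(\pi^*(-K_S)-\lambda E)\cdot\bar L_i\ge 0$ for $\lambda\le\frac{2}{m+1}$. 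Your aggregate checks $R\cdot\bar H_z=0$, $\bar H_z^2=-(n+m+2)<0$ and $(\pi^*(-K_S)-\lambda E)\cdot\bar H_z\ge 0$ do not imply these, because the $\bar L_i$ are not numerically proportional to one another ($\bar L_i^2=-1$ while $\bar L_i\cdot\bar L_j=0$), and for a reducible negative part the conditions ``$P$ nef, $N$ effective, $P\cdot N=0$, $N^2<0$'' genuinely fail to determine the volume: on the blow-up of $\PP^2$ at a point, with $H$ the pullback of a line and $E$ the exceptional curve, $P=0$ and $N=H+2E$ satisfy all of them (indeed $N^2=-3$ and $P\cdot C=0$ for each component $C$), yet $\vol(N)=1>P^2=0$, precisely because $\{H,E\}$ is not negative definite. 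The repair is exactly the component-wise computation the paper performs and you skipped: $\bar L_i^2=-1$, $\bar L_i\cdot\bar L_j=0$, $\bar L_i\cdot E=1$, whence the intersection matrix of the $\bar L_i$ is $-\mathrm{Id}$, $(\bar H_z+E)\cdot\bar L_i=0$ for each $i$, and $(\pi^*(-K_S)-\lambda E)\cdot\bar L_i=\frac{2}{m+1}-\lambda$. With those few lines added, your argument is complete and coincides with the paper's.
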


\begin{proof}
    By a suitable coordinate change, we can assume that $S$ is give by a quasi-homogeneous polynomial
    \begin{equation*}
        tz + f(x,y) = 0
    \end{equation*}
    in $\PP(1, 1, n+1, m+1)$, where $f(x,y)=\prod_{i=1}^{n+m+2}(a_{i}x+b_iy)$ is a homogeneous polynomial that is smooth in $\PP^1$, with weights $\wt(x)=\wt(y)=1$, $\wt(z)=n+1$ and $\wt(t)=m+1$. Then $S$ is singular at the points $\msp_{z}$ of type $\frac{1}{n+1}(1,1)$ and $\msp_t$ of type $\frac{1}{m+1}(1,1)$.

    In a neighborhood of $\msp_t$, we may regard that $x$ and $y$ are local weighted coordinates. Let $\pi\colon Y\to S$ be the weighted blow-up at $\msp_t$ with weights $\wt(x) = 1$ and $\wt(y) = 1$. Then we have
    \[
        K_Y \equiv \pi^*(K_S) - \frac{m - 1}{m + 1}E
    \]
    where $E$ is the exceptional divisor of $\pi$. Let $H_z$ be the hyperplane that is cut out by $z=0$ in $S$. We write
    \[
        H_z = \sum_{i=1}^{n+m+2}L_{i}
    \]
    where $L_i$ is the curve that is given by $z=a_{i}x+b_{i}y = 0$ in $\PP(1,1,n+1,m+1)$. Since
    \begin{equation*}
        \pi^*(L_i) = \bar{L}_i + \frac{1}{m+1}E
    \end{equation*}
    where $\bar{L}_i$ be the strict transform of $L_i$, we have
    \begin{equation*}
        \pi^*(H_z) = \sum_{i=1}^{n+m+2}\bar{L}_{i} + \frac{n+m+2}{m+1}E.
    \end{equation*}
    Then we have the following intersection numbers:
    \[
        \bar{L}_i^2 = -1, \qquad \bar{L}_{i}\cdot \bar{L}_{j} = 0,\qquad \bar{L}_i\cdot E = 1
    \]
    where $i\neq j$.

    We claim that 
    \begin{equation*}
        \beta(E) = A_S(E)(-K_S)^2 - \int_{0}^{\tau (E)}\vol(\pi^*(-K_S) - \lambda E) \dd\lambda
    \end{equation*}
    is not positive. To prove this, we consider the following $\QQ$-divisor
    \[
        \pi^*\left(\frac{2}{1+n}H_z\right)-\lambda E = \frac{2}{1+n}\sum_{i=1}^{n+m+2}\bar{L}_{i} + \left(\frac{2(n+m+2)}{(n+1)(m+1)} - \lambda\right)E
    \]
    where $\lambda$ is non-negative number. Since $\bar{L}_i^2 < 0$ for each $\bar{L}_i$, we have $\tau(E) = \frac{2(n+m+2)}{(n+1)(m+1)}$, that is, $\vol(\pi^*(-K_S) - \lambda E) = 0$ for $\tau(E) < \lambda$.

    We have
    \[
        \left(\frac{2}{n+1}\sum_{i=1}^{n+m+2}\bar{L}_{i} + \left(\frac{2(n+m+2)}{(n+1)(m+1)} - \lambda\right)E\right)\cdot \bar{L}_i = \frac{2}{m+1} - \lambda
    \]
    for each $\bar{L}_i$. They imply that $\pi^*(-K_S)-\lambda E$ is nef for $\lambda\leq\frac{2}{m+1}$. Thus we obtain the following.
    \[
        \vol(\pi^*(-K_S) - \lambda E)=(\pi^*(-K_S) - \lambda E)^2=\frac{4(n+m+2)}{(n+1)(m+1)} - (m+1)\lambda^2
    \]
    for $\lambda\leq\frac{2}{m+1}$.
    
    We assume that $\frac{2}{m+1}\leq\lambda\leq \tau(E)$. Since $(\bar{H}_z + E)\cdot \bar{L}_i = 0$ for each $\bar{L}_i$ and the intersection form of $\bar{L}_1,\ldots ,\bar{L}_{n+m+2}$ is negative definite, the Zariski decomposition of $\pi^*(-K_S) - \lambda E$ is
    \[
        \left(\frac{2(n+m+2)}{(n+1)(m+1)} - \lambda\right)(\bar{H}_z + E) + \left(\lambda - \frac{2}{m+1}\right)\bar{H}_z.
    \]
    From this we have
\begin{equation*}
    \vol(\pi^*(-K_S) - \lambda E)= \vol\left(\left(\frac{2(n+m+2)}{(n+1)(m+1)} - \lambda\right)(\bar{H}_z + E)\right)= \left(\frac{2(n+m+2)}{(n+1)(m+1)} - \lambda\right)^2(n+1).
\end{equation*}
    
    Then we have
    \begin{IEEEeqnarray*}{rCl}
        \IEEEeqnarraymulticol{3}{l}
        {
            \displaystyle\int_{0}^{\tau(E)}\vol(\pi^*(-K_S) - \lambda E) \dd\lambda
        }\\
            & = & \displaystyle\int_{0}^{\frac{2}{m+1}}\frac{4(n+m+2)}{(n+1)(m+1)} - (m+1)\lambda^2~\mathrm{d}\lambda + \displaystyle\int_{\frac{2}{m+1}}^{\tau(E)}\left(\frac{2(n+m+2)}{(n+1)(m+1)} - \lambda\right)^2(n+1)~\mathrm{d}\lambda\\ 
            & = & \frac{8(n+m+2)}{(n+1)(m+1)^2} - \frac{8}{3(m+1)^2} + \frac{8}{3(n+1)^2}.
    \end{IEEEeqnarray*}    

    Moreover we have $A_S(E)(-K_S)^2 = \frac{8(n+m+2)}{(n+1)(m+1)^2}$. As a result, we see that
    \begin{equation*}
        \beta(E) = \frac{8}{3(m+1)^2} - \frac{8}{3(n+1)^2} < 0
    \end{equation*}
    for $0\leq n < m$. By Theorem \ref{beta invariant}, $S$ is not $K$-semistable.
\end{proof}

\begin{lemma}
    Let $S\subset \PP(1,3,3n + 4, 3n + 5)$ be a quasi-smooth weighted hypersurface of degree $6n + 11$ where $n$ is non-negative integer. Then $S$ is not $K$-semistable.
\end{lemma}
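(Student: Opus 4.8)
The plan is to mirror the two preceding lemmas: exhibit a single prime divisor $E$ over $S$ with $\beta(E)<0$ and conclude with Theorem \ref{beta invariant}. After a coordinate change one may take $S$ to be defined by $t^2x+z^2y+\cdots=0$, so that (eliminating $x$ by $t^2x$ near $\msp_t$ and $y$ by $z^2y$ near $\msp_z$) the surface has quotient singularities $\frac{1}{3n+4}(1,1)$ at $\msp_z$ and $\frac{1}{3n+5}(3,3n+4)$ at $\msp_t$, the latter being the most degenerate. Since $-K_S\sim 2H_x$ and the section $H_x=\{x=0\}$ factors as $H_x=C_1+C_2$ with $C_1=\{x=y=0\}$ and $C_2=\{x=0,\ z^2+\cdots y^{n+1}=0\}$, both through $\msp_t$, I would first record on $S$ the self-intersections $C_1^2=-\frac{6n+7}{(3n+4)(3n+5)}$, $C_2^2=-\frac{4}{3(3n+5)}$ and the local intersection $C_1\cdot C_2=\frac{2}{3n+5}$ at $\msp_t$.

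The crucial and least obvious step is the choice of $E$. Because $\msp_t$ is not of type $\frac1k(1,1)$, the $(1,1)$-blow-ups of the earlier lemmas do not apply, and in fact the evident candidates (blowing up $\msp_z$, or blowing up $\msp_t$ with its own orbifold weights) all produce $\beta\ge 0$. Instead I would take $\pi\colon Y\to S$ to be the weighted blow-up of $\msp_t$ in the local coordinates $(y,z)$ with weights $(2,n+1)$; this extracts the divisor of minimal log discrepancy $A_S(E)=\frac{2+(n+1)}{3n+5}=\frac{n+3}{3n+5}$, with $E^2=-\frac{3n+5}{2(n+1)}$. From $\pi^*C_1=\bar C_1+\frac{2}{3n+5}E$ and $\pi^*C_2=\bar C_2+\frac{2(n+1)}{3n+5}E$ one then computes $\bar C_1\cdot E=\frac{1}{n+1}$, $\bar C_2\cdot E=1$, $\bar C_1^2=-\frac{2n+3}{(3n+4)(n+1)}$, $\bar C_2^2=-\frac23$, and—crucially—$\bar C_1\cdot\bar C_2=0$, so the two strict transforms become disjoint on $Y$.

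With these in hand the computation of $\beta(E)$ is transparent. The test divisor is
\[
\pi^*(-K_S)-\lambda E\equiv 2\bar C_1+2\bar C_2+\Big(\tfrac{4(n+2)}{3n+5}-\lambda\Big)E ,
\]
and its pseudo-effective threshold is $\tau(E)=\frac{4(n+2)}{3n+5}$, the value at which the coefficient of $E$ vanishes and the volume drops to $0$. By orthogonality of $\bar C_1,\bar C_2$ the Zariski decomposition splits into three chambers: the divisor is nef until it meets $\bar C_1$ at $\lambda_1=\frac{2(n+1)}{(3n+4)(3n+5)}$, then meets $\bar C_2$ at $\lambda_2=\frac{4}{3(3n+5)}$, and on each subinterval the volume equals $D_\lambda^2+\frac{(D_\lambda\cdot\bar C_1)^2}{|\bar C_1^2|}+\frac{(D_\lambda\cdot\bar C_2)^2}{|\bar C_2^2|}$, the last two terms being switched on past $\lambda_1$ and $\lambda_2$. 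Using $\tau(E)-\lambda_1=\frac{2(2n+3)}{3n+4}$ and $\tau(E)-\lambda_2=\frac43$, the three integrals are elementary, and subtracting $\int_0^{\tau(E)}\vol$ from $A_S(E)(-K_S)^2=\frac{4(n+3)(6n+11)}{3(3n+4)(3n+5)^2}$ yields
\[
\beta(E)=-\frac{4(6n+11)}{3(3n+4)(3n+5)}+\frac{32(n+2)^3}{3(n+1)(3n+5)^2}-\frac{8(2n+3)^2}{3(n+1)(3n+4)^2}-\frac{32}{27}.
\]
One checks this is negative for every $n\ge 0$ (for instance $\beta(E)=-\frac{7}{1350}$ when $n=0$, and $\beta(E)\to 0^-$ as $n\to\infty$), whence $S$ is not K-semistable.

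The entire difficulty sits in the second step. The hard part is to recognise that the destabilising valuation is precisely this weighted blow-up of $\msp_t$ with weights $(2,n+1)$ rather than any of the obvious divisors, and then to carry out its local intersection theory correctly—in particular $\ord_E C_2$ and $\bar C_2^2$—when $n\ge 1$, where $C_2$ itself acquires a singular point at $\msp_t$. Once the weights and the orthogonality $\bar C_1\cdot\bar C_2=0$ are established, the chamber bookkeeping and the final integration are routine.
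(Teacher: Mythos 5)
Your proposal is correct and follows essentially the same route as the paper's proof: the same destabilizing valuation (the Kawamata-type weighted blow-up of $\msp_t$ with weights $(2,n+1)$, giving $A_S(E)=\frac{n+3}{3n+5}$ and $E^2=-\frac{3n+5}{2(n+1)}$), the same intersection data $\bar C_1^2=-\frac{2n+3}{(n+1)(3n+4)}$, $\bar C_2^2=-\frac{2}{3}$, $\bar C_1\cdot\bar C_2=0$, and the same three Zariski chambers with walls at $\lambda_1=\frac{2(n+1)}{(3n+4)(3n+5)}$, $\lambda_2=\frac{4}{3(3n+5)}$ and $\tau(E)=\frac{4n+8}{3n+5}$. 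One remark in your favor: your closed form for $\beta(E)$ (e.g.\ $-\frac{7}{1350}$ at $n=0$) is the one actually consistent with the paper's own integral $\frac{8}{27}\cdot\frac{108n^3+594n^2+1053n+601}{(3n+4)^2(3n+5)^2}$ and with $A_S(E)(-K_S)^2=\frac{4(n+3)(6n+11)}{3(3n+4)(3n+5)^2}$, whereas the paper's final displayed expression $-\frac{1}{27}\cdot\frac{702n^3+3753n^2+6489n+3620}{(3n+4)^2(3n+5)^2}$ contains an arithmetic slip (at $n=0$ it gives $-\frac{181}{540}\neq-\frac{7}{1350}$); since both expressions are negative for all $n\geq 0$, the conclusion that $S$ is not $K$-semistable is unaffected.
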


\begin{proof}
By a suitable coordinates change, we can assume that $S$ is given by a quasi-homogeneous polynomial
\begin{equation*}
    t^2x + ty^{n+2} + z^2y + xf_{6n+10}(x,y,z,t) = 0
\end{equation*}
where $f_{6n+10}(x,y,z,t)$ is a quasi-homogeneous polynomial of degree $6n+10$ with weights $\wt(x) = 1$, $\wt(y) = 3$ $\wt(z) = 3n+4$ and $\wt(t) = 3n+5$. 

Let $H_x$ be the hyperplane that is cut out by $x=0$ in $S$. Then it is isomorphic to the variety given by
\begin{equation*}
    (ty^{n+1} + z^2)y = 0
\end{equation*}
in $\PP(3,3n+4,3n+5)$. From this we can see that the following holds:
\begin{equation*}
    H_x = L + R
\end{equation*}
where $L$ and $R$ are the curves given by $x = y = 0$ and $x = ty^{n+1} + z^2 = 0$ in $\PP(1,3,3n + 4, 3n + 5)$, respectively. We have the following intersection numbers:
\[
    L\cdot R = \frac{2}{3n+5},\qquad H_x\cdot L = \frac{1}{(3n+4)(3n+5)},\qquad H_x\cdot R = \frac{2}{3(3n+5)},
\]

\[
    L^2 = -\frac{6n+7}{(3n+4)(3n+5)},\qquad R^2 = -\frac{4}{3(3n+5)}.
\]

Consider the singular point $\msp_t$ of type $\frac{1}{3n+5}(2, n+1)$ of $S$. In the chart defined by $t=1$, $S$ is given by
\begin{equation*}
    x + y^{n+2} + z^2y + xf_{6n+10}(x,y,z,1) = 0.
\end{equation*}
From above equation, we regard $y$ and $z$ as local weighted coordinates in a neighborhood of $\msp_t$ with weights $\wt(y) = 2$ and $\wt(z) = n+1$. 

Let $\pi\colon Y \to S$ be the weighted blow-up at $\msp_t$ with weights $\wt(y) = 1$ and $\wt(z) = n+1$. Then we have
\begin{equation*}
    K_Y \equiv \pi^*(K_S) - \frac{2n+2}{3n+5}E
\end{equation*}
where $E$ is the exceptional divisor of $\pi$. Since
\begin{equation*}
    \pi^*(L) = \bar{L} + \frac{2}{3n+5}E,\qquad \pi^*(R) = \bar{R} + \frac{2n+2}{3n+5}E,\qquad E^2 = -\frac{3n+5}{2n+2}
\end{equation*}
where $\bar{L}$ and $\bar{R}$ are the strict transforms of $L$ and $R$, respectively, we have the following intersection numbers:
\begin{equation*}
    \bar{L}^2 = -\frac{2n + 3}{(n+1)(3n+4)},\qquad \bar{R}^2 = -\frac{2}{3},\qquad \bar{L}\cdot\bar{R} = 0,\qquad  \bar{L}\cdot E = \frac{1}{n+1},\qquad \bar{R}\cdot E = 1.
\end{equation*}

To compute $\beta(E)$ we must compute $\vol(\pi^*(-K_S) - \lambda E)$ for any non-negative number $\lambda$. So we consider the following.
\begin{equation*}
    \pi^*(-K_S)-\lambda E \equiv 2\bar{L} + 2\bar{R} + \left(\frac{4n+8}{3n+5} - \lambda \right)E.
\end{equation*}
Since $\bar{L}^2 < 0$ and $\bar{R} < 0$ we have $\vol(\pi^*(-K_S) - \lambda E) = 0$ for $\frac{4n+8}{3n+5} < \lambda$. Meanwhile, we have
\begin{equation*}
    \left(2\bar{L} + 2\bar{R} + \left(\frac{4n+8}{3n+5} - \lambda\right)E\right)\cdot \bar{L} = \frac{2}{(3n+4)(3n+5)} - \frac{\lambda}{n+1}\geq 0
\end{equation*}
and
\begin{equation*}
    \left(2\bar{L} + 2\bar{R} + \left(\frac{4n+8}{3n+5} - \lambda\right)E\right)\cdot \bar{R} = \frac{4}{3(3n+5)} - \lambda\geq 0
\end{equation*}
for $\lambda\leq \frac{2n+2}{(3n+4)(3n+5)}$. Thus $\pi^*(-K_S) - \lambda E$ is nef and
\begin{equation*}
    \vol(\pi^*(-K_S) - \lambda E) = (\pi^*(-K_S) - \lambda E)^2 = \frac{4(6n+11)}{3(3n+4)(3n+5)} - \frac{3n+5}{2n+2}\lambda^2
\end{equation*}
for $\lambda\leq \frac{2n+2}{(3n+4)(3n+5)}$. Next, we assume that $\frac{2n+2}{(3n+4)(3n+5)}\leq \lambda \leq \frac{4}{3(3n+5)}$. We set
\begin{equation*}
    P = \frac{3n+4}{2n+3}\left(\frac{4n+8}{3n+5} - \lambda\right)\bar{L} + 2\bar{R} + \left(\frac{4n+8}{3n+5} - \lambda\right)E.
\end{equation*}
Since $P\cdot \bar{L} = 0$, $P\cdot \bar{R} > 0$ and $P\cdot E > 0$, $P$
is nef. Moreover we see that the Zariski decomposition of $\pi^*(-K_S) - \lambda E$ is
\begin{equation*}
    P + \left(2-\frac{3n+4}{2n+3}\left(\frac{4n+8}{3n+5} - \lambda\right)\right)\bar{L}
\end{equation*}
where $P$ is the positive part. Thus we have
\begin{equation*}
    \vol(\pi^*(-K_S) - \lambda E) = P^2 = -\frac{8}{3} + 4\left(\frac{4n+8}{3n+5} - \lambda\right) - \frac{6n+7}{4n+6}\left(\frac{4n+8}{3n+5} - \lambda\right)^2
\end{equation*}
for $\frac{2n+2}{(3n+4)(3n+5)}\leq \lambda \leq \frac{4}{3(3n+5)}$.

Finally we consider case that $\frac{4}{3(3n+5)}\leq \lambda \leq \frac{4n+8}{3n+5}$. We set
\begin{equation*}
    P_2 = \frac{3n+4}{2n+3}\left(\frac{4n+8}{3n+5} - \lambda\right)\bar{L} + \frac{3}{2}\left(\frac{4n+8}{3n+5} - \lambda\right)\bar{R} + \left(\frac{4n+8}{3n+5} - \lambda\right)E.
\end{equation*}
Since $P_2\cdot \bar{L} = 0$, $P_2\cdot \bar{R}=0$ and $P_2\cdot E > 0$, $P_2$ is nef. Moreover the intersection form of $\bar{L}$ and $\bar{R}$ is negative definite. Thus
\begin{equation*}
    \pi^*(-K_S) - \lambda E \equiv P_2 + \left(2-\frac{3n+4}{2n+3}\left(\frac{4n+8}{3n+5} - \lambda\right)\right)\bar{L} + \left(2 - \frac{3}{2}\left(\frac{4n+8}{3n+5} - \lambda\right)\right)\bar{R}
\end{equation*}
is the Zariski decomposition of $\pi^*(-K_S) - \lambda E$ where $P_2$ is the positive part. It implies that
\begin{equation*}
    \vol(\pi^*(-K_S) - \lambda E) = P_2^2 = \left(\frac{3}{2} - \frac{6n+7}{4n+6}\right)\left(\frac{4n+8}{3n+5} - \lambda \right)^2
\end{equation*}
for $\frac{4}{3(3n+5)}\leq \lambda \leq \frac{4n+8}{3n+5}$. Therefore we have
\begin{equation*}
    \int_0^{\infty}\vol(\pi^*(-K_S) - \lambda E) d\lambda = \frac{8}{27}\left(\frac{108n^3 + 594n^2 + 1053n + 601}{(3n+4)^2(3n+5)^2}\right).
\end{equation*}
As a result, we have
\begin{equation*}
    \begin{array}{lll}
        \beta(E) & = & A_S(E)(-K_S)^2 - \int_0^{\infty}\vol(\pi^*(-K_S) - \lambda E) d\lambda\\ \\
        & = & - \dfrac{1}{27}\left(\dfrac{702n^3 + 3753n^2 + 6489n + 3620}{(3n+4)^2(3n+5)^2} \right) < 0.
    \end{array}
\end{equation*}
By Theorem \ref{beta invariant}, $S$ is not $K$-semistable.
\end{proof}


\begin{thebibliography}{99}
\bibitem{A02}
C.~Araujo, 
\emph{K\"{a}hler-Einstein metrics for some quasi-smooth log del Pezzo surfaces},
Trans. Amer. Math. Soc. 354 (2002) 4303–3312.

\bibitem{BG08}
C. Boyer, K. Galicki, \emph{Sasakian geometry}, Oxford University Press, 2008.

\bibitem{BGN03}
 C. Boyer, K. Galicki, M. Nakamaye, \emph{On the geometry of Sasakian–Einstein 5-manifolds}, Math. Annalen
325 (2003), 485–524.





\bibitem{BJ20}
H.~Blum, M.~Jonsson, 
\emph{Thresholds, valuations, and $K$-stability}, 
Adv. Math. 365 (2020). 

\bibitem{BauerKuronyaSzemberg}
Th. Bauer, A. K\"uronya, T. Szemberg, \emph{Zariski chambers, volumes, and stable base loci}, J. Reine Angew. Math. \textbf{576} (2004), 209--233.

\bibitem{CPS10}
I.~Cheltsov, J.~Park, C.~Shramov, 
\emph{Exceptional del Pezzo hypersurfaces}, 
J. Geom. Anal. 20(4) (2010) 787–816.

\bibitem{CPS18}
I.~Cheltsov, J.~Park, C.~Shramov,
\emph{Delta invariants of singular del Pezzo surfaces},
J. Geom. Anal. to appear




\bibitem{F19}
K.~Fujita, 
\emph{On the uniform $K$-stability for some asymptotically log del Pezzo surfaces}, preprint, arXiv:1907.04998 (2019).

\bibitem{F19b} 
K.~Fujita,
\emph{A valuative criterion for uniform K-stability of $\QQ$-Fano varieties},
J. Reine Angew. Math 2019.751 (2019), 309--338.




\bibitem{JK01}
J.~Johnson, J.~ Koll\'ar, 
\emph{K\"ahler-Einstein metrics on log del Pezzo surfaces in weighted projective 3-spaces}, 
Ann. de l’Institut Fourier 51 (2001), 69--79.




\bibitem{Laz-positivity-in-AG}
R. Lazarsfeld, \emph{Positivity in Algebraic Geometry, I, II}, Springer 2004.

\bibitem{LTW17}
C.~Li, G.~Tian, F.~Wang, 
\emph{On Yau--Tian--Donaldson conjecture for singular Fano varieties}, 
preprint, arXiv:1711.09530 (2017).




\bibitem{Prokhorov}
Yu.~Prokhorov, \emph{On the Zariski Decomposition Problem}, Proc. Steklov Inst. Math. \textbf{240} (2003), 37--65.

\bibitem{PW19}
J.~Park, J.~Won, 
\emph{Simply connected Sasaki-Einstein rational homology 5-spheres}, 
arXiv:1905.13304 (2019).

\end{thebibliography}
\end{document}